\newtheorem{thm}{Theorem}[section]
\newtheorem{prop}[thm]{Proposition}
\newtheorem{lemma}[thm]{Lemma}
\newtheorem{cor}[thm]{Corollary}
\def\XXint#1#2#3{{\setbox0=\hbox{$#1{#2#3}{\int}$ }
\vcenter{\hbox{$#2#3$ }}\kern-.6\wd0}}
\newcommand{\C}{\mathbb{C}}
\theoremstyle{definition}
\newtheorem{definition}[thm]{Definition}
\newtheorem{example}[thm]{Example}
\theoremstyle{remark}
\numberwithin{equation}{section}
\newtheoremstyle{ser}
{8pt}
{8pt}
{\it}
{}
{\sf}
{:}
{6mm}
{}
\newtheoremstyle{serr}
{8pt}
{8pt}
{\normalfont}
{}
{\sf}
{.}
{6mm}
{}
\theoremstyle{ser}
\theoremstyle{serr}
\tikzset{node distance=2cm, auto}
\newcommand*\bigcdot{\mathpalette\bigcdot@{.5}}
\newcommand*\bigcdot@[2]{\mathbin{\vcenter{\hbox{\scalebox{#2}{$\m@th#1\bullet$}}}}}
\begin{document}

\title{The Douglas Lemma for von Neumann Algebras and Some Applications}

\author{Soumyashant Nayak}
\address{Smilow Center for Translational Research\\
 University of Pennsylvania\\
  Philadelphia\\
   PA 19104\\
   ORCiD: 0000-0002-6643-6574}
\email{nsoum@pennmedicine.upenn.edu}
\urladdr{https://nsoum.github.io/} 

\begin{abstract}
In this article, we discuss some applications of the well-known Douglas factorization lemma in the context of von Neumann algebras. Let $\mathcal{B}(\mathscr{H})$ denote the set of bounded operators on a complex Hilbert space $\mathscr{H}$, and $\mathscr{R}$ be a von Neumann algebra acting on $\mathscr{H}$. We prove some new results about left (or, one-sided) ideals of von Neumann algebras; for instance, we show that every left ideal of $\mathscr{R}$ can be realized as the intersection of a left ideal of $\mathcal{B}(\mathscr{H})$ with $\mathscr{R}$. We also generalize a result by Loebl and Paulsen (Linear Algebra Appl. 35 (1981), 63--78) pertaining to $C^*$-convex subsets of $\mathcal{B}(\mathscr{H})$ to the context of $\mathscr{R}$-bimodules.

\bigskip\noindent
{\bf Keywords:}
Douglas lemma, Left ideals of von Neumann algebras, $C^*$-convexity 
\vskip 0.01in \noindent
{\bf MSC2010 subject classification:} 16D25, 47L20, 52A01
\end{abstract}

\maketitle

\section{Introduction}
In \cite{douglas-factor}, Douglas notes that the notions of majorization, factorization, and range inclusion, for operators on a Hilbert space are intimately connected. We mention the main result  of \cite{douglas-factor} below which is referred to as the {\it Douglas lemma} or the {\it Douglas factorization theorem} in the literature. 
\begin{thm}[Douglas lemma]
{\sl For bounded operators $A, B$ on a Hilbert space $\mathscr{H}$, the following statements are equivalent :
\begin{itemize}
\item[(i)] (majorization) $A^*A \le \lambda ^2 B^*B$ for some $\lambda \ge 0$;
\item[(ii)] (factorization) $A = CB$ for some bounded operator $C$ on $\mathscr{H}$;
\item[(iii)] (range inclusion) range($A^*$) $\subseteq$ range($B^*$).
\end{itemize}
}
\end{thm}
It naturally appears in many contexts, and as Douglas observed, ``$\dots$ fragments of these results are to be found scattered throughout the literature (usually buried in proofs) $\dots$ ." In this article, we give a constructive proof of the Douglas lemma for von Neumann algebras. In \S \ref{sec:app}, we discuss some applications of this result to the structure of left (or, one-sided) ideals of von Neumann algebras, and to the notion of $C^*$-convexity in the context of bimodules over a von Neumann algebra. 

For the convenience of the reader, we briefly recall some basic notions in operator algebras and set up the notation. We shall denote a complex Hilbert space by $\mathscr{H}$ and the set of bounded operators on $\mathscr{H}$ by $\mathcal{B}(\mathscr{H})$. The family $\mathcal{B}(\mathscr{H})$ is an algebra relative to the usual addition and multiplication (composition) of operators.  Let $\| \cdot \|$ denote the usual operator norm. Provided with this norm, $\mathcal{B}(\mathscr{H})$  becomes a Banach algebra.  A family $\Gamma$ of operators on $\mathscr{H}$ is said to be ``self-adjoint'' when $A^*$, the adjoint-operator of $A$, is in $\Gamma$ if $A$ is in $\Gamma$. The norm-closed self-adjoint subalgebras of $\mathcal{B}(\mathscr{H})$ are called ``C*-algebras'' and those closed in the weak-operator topology on $\mathcal{B}(\mathscr{H})$ are the ``von Neumann algebras''.  Our von Neumann  algebras are required to contain the identity operator $I$ on $\mathscr{H}$, that is, $Ix=x$ for each $x$ in $\mathscr{H}$. We assume that our $C^*$-algebras are unital. We often denote a C*-algebra by ``$\mathfrak{A}$'' and a von Neumann algebra by ``$\mathscr{R}$.'' In the rest of this section, we discuss the two contexts in which we apply the Douglas lemma - (i) one-sided ideal structure of von Neumann algebras, (ii) $C^*$-convexity in bimodules over a $C^*$-algebra.

\subsection{Ideals of $C^*$-algebras and von Neumann algebras}
The structures of the left ideals and right ideals in a $C^*$-algebra, $\mathfrak{A}$, are very closely tied to the representation theory of $\mathfrak{A}$; for instance, via the GNS construction. In particular, the representation theory of $\mathcal{B}(\mathscr{H})$ is very much a part of this. It is well-known that the weak-operator closed left ideals in a von Neumann algebra $\mathscr{R}$ are left principal ideals of the form $\mathscr{R}E$ for a projection $E$ in $\mathscr{R}$. In Lemma \ref{lem:isolated} we prove that for a positive self-adjoint operator $A$ in $\mathscr{R}$, the left principal ideal $\mathscr{R}A$ is weak-operator closed if and only if $0$ is an isolated point in the spectrum of $A$. 

For the discussion in this paragraph, we assume that the von Neumann algebra $\mathscr{R}$ acting on $\mathscr{H}$ is infinite-dimensional, so as to avoid making vacuous statements. In Theorem \ref{thm:count-gen}, we show that a norm-closed left ideal in $\mathscr{R}$ which is not weak-operator closed must be (algebraically) generated by uncountably many operators in $\mathscr{R}$. We include a proof of the fact that the lattice of norm-closed left ideals of a $C^*$-algebra acting on $\mathscr{H}$ may be derived from the lattice of norm-closed left ideals of $\mathcal{B}(\mathscr{H})$, via intersection with the $C^*$-algebra. On a similar note, in Corollary \ref{cor:intersection} we show that the lattice of left ideals of $\mathscr{R}$ may be derived from the lattice of left ideals of $\mathcal{B}(\mathscr{H})$, via intersection with $\mathscr{R}$. Although it is straightforward to see that the intersection of a left ideal of $\mathcal{B}(\mathscr{H})$ with $\mathscr{R}$ is a left ideal of $\mathscr{R}$, what we show is that {\it every} left ideal of $\mathscr{R}$ can be obtained in such a manner.

\subsection{C*-convexity}
\label{subsec:c-star}
The numerical range of an operator $T$ in $\mathcal{B}(\mathscr{H})$ is defined as $$W(T) := \{ \langle Tx, x \rangle : x \in \mathscr{H}, \|x \| = 1 \}.$$ The Toeplitz-Hausdorff theorem (cf. \cite{toeplitz}, \cite{hausdorff}) states that the numerical range of a bounded operator is a convex set. This subset of the complex plane $\mathbb{C}$ succinctly captures information about the eigenvalues, algebraic, analytic structure
of $T$ in the geometry of its boundary. In \cite{arveson1}, \cite{arveson2}, Arveson defines the notion of the $n^{\mathrm{th}}$-matrix range (a generalized  non-commutative numerical range) of an operator $T$  as $$W_n(T) := \{ \Phi(T) : \Phi \textrm{ is a unital completely positive map from } C^*(T) \textrm{ to } M_n(\mathbb{C}) \}.\footnote{Note that $W_1(T)$ is not necessarily the same as the numerical range $W(T)$. But they have the same closure in $\mathbb{C}$.}$$

The description of completely positive maps given by Stinespring's theorem (cf. \cite{stinespring}), and Choi's theorem (cf. \cite{choi}) for completely positive maps between finite-dimensional $C^*$-algebras involving Kraus operators suggest the importance of studying a non-commutative version of convexity called $C^*$-convexity which we define below in the context of bimodules over a $C^*$-algebra. 

\begin{definition}
Let $\mathfrak{A}$ be a $C^*$-algebra with identity $I$ and $\mathfrak{H}$ be a $\mathfrak{A}$-bimodule. For vectors $A_1, \cdots, A_n$ in $\mathfrak{H}$ ($n \in \mathbb{N}$), and operators $T_1, \cdots, T_n \in \mathfrak{A}$ satisfying $T_1^* T_1 + \cdots + T_n^* T_n = I$, the vector $T_1^* A_1 T_1 + \cdots + T_n^* A_n T_n$ in $\mathfrak{H}$ is called a {\it $C^*$-convex} (or {\it $\mathfrak{A}$-convex}) {\it combination} of the $A_i$'s.

A subset $\mathscr{S}$ of $\mathfrak{H}$ is said to be \emph{$C^*$-convex} in $\mathfrak{H}$ (or \emph{$\mathfrak{A}$-convex}) if for vectors $A_1, \cdots, A_n$ in $\mathscr{S}$ ($n \in \mathbb{N}$), every $\mathfrak{A}$-convex combination of the $A_i$'s is also in $\mathscr{S}$.
\end{definition}

\begin{definition}
For vectors $A_1, A_2, \cdots, A_n$ in the $\mathfrak{A}$-bimodule $\mathfrak{H}$, the set $$\{ \sum_{i=1}^n T_i ^* A_i T_i : T_1, T_2, \cdots, T_n \in \mathfrak{A}, \sum_{i=1}^n T_i ^* T_i = I \} \subseteq \mathfrak{H}$$ is said to be the {\it $C^*$-polytope} (or {\it $\mathfrak{A}$-polytope}) generated by the $n$-tuple $\mathbf{A} := (A_1, A_2, \cdots, A_n)$. The $\mathfrak{A}$-polytope generated by a $2$-tuple $(A_1, A_2)$ is called the {\it $C^*$-segment} (or {\it $\mathfrak{A}$-segment}) joining the elements $A_1, A_2$ in $\mathfrak{H}$. 
\end{definition}

We will use the terms ``$C^*$-convex set", ``$C^*$-polytope'', ``$C^*$-segment" in the context of a bimodule over a general $C^*$-algebra. In the setting of a specific $C^*$-algebra $\mathfrak{A}$, we prefer to use the terms ``$\mathfrak{A}$-convex set", ``$\mathfrak{A}$-polytope", ``$\mathfrak{A}$-segment".

\begin{example}
\label{ex:C_star}
\begin{itemize}
\item[(i)] A $C^*$-algebra may be viewed as a $\mathbb{C}$-bimodule with the left and right action both given by the usual scaling. In this case, $C^*$-convexity reduces to the usual notion of convexity.

\item[(ii)] A $C^*$-algebra $\mathfrak{A}$ may be viewed as a $\mathfrak{A}$-bimodule with the left action (right action, respectively) given by multiplication on the left (on the right, respectively). For $\mathfrak{A} = \mathcal{B}(\mathscr{H})$ this is the context in which $C^*$-convexity is discussed by Loebl and Paulsen in \cite{loebl-paulsen}.
\end{itemize}

\end{example}

\begin{example}
Consider $M_n(\C)$ as an $M_n(\C)$-bimodule in the sense of Example \ref{ex:C_star}, (ii). The $n^{\mathrm{th}}$-matricial ranges $W_n(T) \subset M_n(\mathbb{C})$ are not only convex but also $M_n(\mathbb{C})$-convex. 
\end{example}

At this point, we direct the interested reader to \cite{loebl-paulsen} for an exposition on some basic results in the theory of $C^*$-convexity. Line segments in complex (or real) vector spaces are the most basic of convex sets. But although every $C^*$-convex set is convex, as a consequence of the non-commutativity of the ``coefficients", the $C^*$-segments in bimodules over a $C^*$-algebra need not be $C^*$-convex or even convex. For instance, in $M_2(\mathbb{C})$  (viewed as an $M_2(\mathbb{C})$-bimodule) consider 
$$A = \begin{bmatrix}
1 & 0\\
0 & 0
\end{bmatrix}, B = \begin{bmatrix}
0 & 0\\
0 & 0
\end{bmatrix}, C= \begin{bmatrix}
0 & 0\\
0 & 1
\end{bmatrix}.$$
As $A$ is unitarily equivalent to $C$, the $M_2(\mathbb{C})$-segment $S(A, B)$ contains $C$. The mean of $A$ and $C$ is of full-rank but the elements in $S(A, B)$ have rank $0$ or $1$. Hence $\frac{A+C}{2}$ is not in $S(A, B)$ although both $A, C$ are in $S(A, B)$.

A subset $S$ of a complex vector space $V$ is said to be convex if it contains the line segment joining any two points in it. Equivalently, the subset $S$ is said to be convex if it contains all convex combinations of its elements.
\begin{definition}
\label{def:pseudo-cstar}
Let $\mathfrak{A}$ be a unital $C^*$-algebra. We say that a set $\mathscr{S}$ in a $\mathfrak{A}$-bimodule $\mathfrak{H}$ is \emph{$C^*$-$2$-convex} in $\mathfrak{H}$ (or {\it $\mathfrak{A}$-$2$-convex}) if the $\mathfrak{A}$-segment joining any two elements in $\mathscr{S}$ is contained in $\mathscr{S}$.
\end{definition}

For complex vector spaces (thought of as $\C$-bimodules in the sense of Example \ref{ex:C_star}, (i)), the notions of $C^*$-convexity and $C^*$-$2$-convexity coincide.  Let $\mathfrak{A}$ be a $C^*$-algebra. In a $\mathfrak{A}$-bimodule $\mathfrak{H}$, clearly a $\mathfrak{A}$-convex set is $\mathfrak{A}$-$2$-convex. Because of the (generally) non-convex nature of $C^*$-segments, it is not readily apparent whether every $\mathfrak{A}$-$2$-convex set is also $\mathfrak{A}$-convex. As determining $C^*$-$2$-convexity of a set is a more direct affair involving pairs of elements, a result in the affirmative would be of practical utility in determining $C^*$-convexity of subsets of $\mathfrak{H}$.

The results in \cite[Theorem 15, 16]{loebl-paulsen} proved by Loebl and Paulsen may be more generally viewed as having shown that $\mathcal{B}(\mathscr{H})$-convexity and $\mathcal{B}(\mathscr{H})$-$2$-convexity are equivalent concepts in the context of $\mathcal{B}(\mathscr{H})$-bimodules. In Theorem \ref{prop:segment}, for a finite von Neumann algebra $\mathscr{R}$, we show that a subset $\mathscr{S}$ of a $\mathscr{R}$-bimodule is $\mathscr{R}$-convex if and only if $\mathscr{S}$ is $\mathscr{R}$-$2$-convex. In Theorem \ref{prop:segment2}, we prove a similar equivalence for properly infinite von Neumann algebras. Using the type decomposition of von Neumann algebras, we obtain the result for any von Neumann algebra. 

\subsection{Acknowledgments}
This article is based on a portion of the author's doctoral dissertation submitted to the University of Pennsylvania, Philadelphia. The general references used are \cite{kadison-ringrose1}, \cite{kadison-ringrose2}. The author acknowledges and extends his heartfelt gratitude to his former advisor, Prof. Richard V. Kadison, for many stimulating conversations about mathematics in general, and operator algebras in particular.

\section{The Douglas lemma for von Neumann algebras}
\label{sec:douglas}
\begin{thm}[Douglas factorization lemma]
\label{theorem:douglas}
\textsl{
Let $\mathscr{R}$ be a von Neumann algebra acting on the Hilbert space $\mathscr{H}$. For $A, B$ in $\mathscr{R}$ the following are equivalent :
\begin{itemize}
\item[(i)] $A^*A \le \lambda^2 B^*B$ for some $\lambda \ge 0$;
\item[(ii)] $A=CB$ for some operator $C$ in $\mathscr{R}$.
\end{itemize}
In addition, if $A^*A = B^*B$, then $C$ can be chosen to be a partial isometry with initial projection the range projection of $B$, and final projection as the range projection of $A$.
}
\end{thm}
\begin{proof}
(i) $\Longrightarrow$ (ii)\\
For any vector $x$ in the Hilbert space $\mathscr{H}$, we have that $\|Ax\|^2 = \langle A^*A x, x \rangle \le \lambda^2 \langle B^*B x, x\rangle = \lambda^2 \|Bx\|^2$ which implies $\|Ax\| \le \lambda \|Bx\|$. Thus if $Bx=0$, it follows that $Ax = 0$ and the linear map $C$ defined on the range of $B$ by $C(Bx) = Ax$ is well-defined and also bounded (with norm less than or equal to $\lambda$). Thus we may extend the domain of definition of $C$ to $\mathrm{ran}(B)^{-}$ the closure of the range of $B$. If $z$ is a vector in $\mathrm{ran}(B)^{\perp}$, we define $Cz = 0$. Thus $C$ is a bounded operator on $\mathscr{H}$ such that $A=CB$ with $\| C \| \le \lambda$.

Let $R$ be a self-adjoint operator in the commutant $\mathscr{R}'$ of $\mathscr{R}$. Then $RA = AR, RB = BR$ and the linear subspace $\mathrm{ran}(B)$ is invariant under $R$ and so is the closed subspace $\mathrm{ran}(B)^{\perp}$ (as $R$ is self-adjoint). For vectors $x_1$ in $\mathscr{H}$ and $x_2$ in $\mathrm{ran}(B)^{\perp}$, we have that $CR(Bx_1 + x_2) = CRBx_1 + C(Rx_2) = CB(Rx_1) + 0 = A(Rx_1) = R(Ax_1) = RCBx_1 = RC(Bx_1 + x_2)$. Thus $RC$ and $CR$ coincide on the dense subspace of $\mathscr{H}$ given by $\mathrm{ran}(B) \oplus \mathrm{ran}(B)^{\perp}$. Being bounded operators, we note that $RC = CR$ for any self-adjoint operator $R$ in $\mathscr{R}'$. As every element in a von Neumann algebra can be written as a finite linear combination of self-adjoint elements, we conclude that $C$ commutes with every element in $\mathscr{R}'$. By the double commutant theorem (cf. \cite{von-dc}), $C$ is in $(\mathscr{R}')' = \mathscr{R}$.

(ii) $\Longrightarrow$ (i)\\
If $A=CB$ for some operator $C \in \mathscr{R}$, then $A^*A = B^*C^*CB \le \|C\|^2 B^*B$. Thus, we may pick $\lambda = \|C\|$.

If $A^*A = B^*B$, then $\|Af\| = \|Bf\|$ for any vector $f$ in $\mathscr{H}$. Thus, the second part follows from the explicit definition of the operator $C$ earlier in the proof.
\end{proof}

The polar decomposition theorem for von Neumann algebras is a direct consequence of the Douglas lemma.
\begin{cor}[Polar decomposition theorem]
\textsl{
Let $\mathscr{R}$ be a von Neumann algebra acting on the Hilbert space $\mathscr{H}$. For an operator $A$ in $\mathscr{R}$, there is a partial isometry $V$ with initial projection the range projection of $(A^*A)^{\frac{1}{2}}$, and final projection as the range projection of $A$ such that $A = V(A^*A)^{\frac{1}{2}}$.
}
\end{cor}
\begin{proof}
Let $B$ denote the operator $(A^*A)^{\frac{1}{2}}$. Clearly $A^*A = B^*B$ and thus from the second part of the Douglas lemma, the corollary follows.
\end{proof}

\section{Applications}
\label{sec:app}

\subsection{Left ideals of von Neumann algebras}

\begin{lemma}
\label{lem:left ideal}
\textsl{
Let $A, B$ be operators in a von Neumann algebra $\mathscr{R}$. The left ideal $\mathscr{R}A$ in $\mathscr{R}$ is contained in the left ideal $\mathscr{R}B$ if and only if $A^*A \le \lambda^2 B^*B$ for some $\lambda \ge 0$. As a consequence, for any $A$ in $\mathscr{R}$, we have that $\mathscr{R}A=\mathscr{R}\sqrt{A^*A}.$
}
\end{lemma}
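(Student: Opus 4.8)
The plan is to observe that containment of principal left ideals is governed by a single left-factorization relation, and then to invoke the Douglas lemma (Theorem \ref{theorem:douglas}) to translate that factorization into the majorization inequality. The key preliminary observation I would record is that, since $\mathscr{R}$ contains the identity $I$, one has $\mathscr{R}A \subseteq \mathscr{R}B$ if and only if $A \in \mathscr{R}B$, that is, if and only if $A = CB$ for some $C \in \mathscr{R}$. Indeed, if $\mathscr{R}A \subseteq \mathscr{R}B$ then $A = IA$ lies in $\mathscr{R}A$, hence in $\mathscr{R}B$; conversely, if $A = CB$ then $RA = (RC)B \in \mathscr{R}B$ for every $R \in \mathscr{R}$, so the whole ideal $\mathscr{R}A$ sits inside $\mathscr{R}B$.

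With this reduction in hand, both directions of the stated equivalence become immediate. If $A^*A \le \lambda^2 B^*B$ for some $\lambda \ge 0$, then by the implication (i) $\Rightarrow$ (ii) of Theorem \ref{theorem:douglas} one may write $A = CB$ with $C \in \mathscr{R}$, whence $\mathscr{R}A \subseteq \mathscr{R}B$ by the observation above. Conversely, if $\mathscr{R}A \subseteq \mathscr{R}B$, then $A = CB$ for some $C \in \mathscr{R}$, and the implication (ii) $\Rightarrow$ (i) of Theorem \ref{theorem:douglas} yields $A^*A \le \lambda^2 B^*B$, in fact with $\lambda = \|C\|$.

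For the concluding assertion I would apply the equivalence just established in both directions with $B := \sqrt{A^*A} = (A^*A)^{1/2}$. Since this $B$ is positive, $B^*B = B^2 = A^*A$, so the two majorization inequalities $A^*A \le B^*B$ and $B^*B \le A^*A$ both hold trivially with $\lambda = 1$. The first gives $\mathscr{R}A \subseteq \mathscr{R}\sqrt{A^*A}$ and the second gives $\mathscr{R}\sqrt{A^*A} \subseteq \mathscr{R}A$, so the two ideals coincide. I do not expect a genuine obstacle beyond making the correct reduction: the entire content is carried by the Douglas lemma, and the only point requiring care is the use of $I \in \mathscr{R}$ to pass between ideal containment and the single factorization $A = CB$.
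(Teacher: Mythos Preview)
Your proof is correct and follows essentially the same route as the paper: reduce the ideal containment $\mathscr{R}A \subseteq \mathscr{R}B$ to the single factorization $A = CB$ via $I \in \mathscr{R}$, invoke Theorem~\ref{theorem:douglas} in both directions, and then deduce $\mathscr{R}A = \mathscr{R}\sqrt{A^*A}$ from the identity $A^*A = \sqrt{A^*A}\,\sqrt{A^*A}$. The only difference is expository---you spell out the reduction step in more detail than the paper does.
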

\begin{proof}
Its straightforward to see that $A$ is in $\mathscr{R}B$ if and only if $\mathscr{R}A \subseteq \mathscr{R}B$. And from Theorem \ref{theorem:douglas}, we have that $A$ is in $\mathscr{R}B$ if and only if $A^*A \le \lambda^2 B^*B$ for some $\lambda \ge 0$. 

Further, $\mathscr{R}A = \mathscr{R}B$ if and only if $B^*B \le \lambda^2 A^*A$ and $A^*A \le \mu^2 B^*B$ for some $\lambda, \mu \ge 0$. In particular, if $A^*A=B^*B$, then $\mathscr{R}A = \mathscr{R}B$. Noting that $A^*A = \sqrt{A^*A} \sqrt{A^*A}$, we conclude that $\mathscr{R}A = \mathscr{R} \sqrt{A^*A}$.
\end{proof}

\begin{lemma}
\label{lem:isolated}
\textsl{
Let $A$ be an operator in a von Neumann algebra $\mathscr{R}$. Then the left ideal $\mathscr{R}A$ is weak-operator closed if and only if $0$ is an isolated point in the spectrum of $A^*A$.
}
\end{lemma}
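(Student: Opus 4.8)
The plan is to reduce to the case of a positive operator and then treat each implication using the continuous functional calculus together with the order criterion of Lemma \ref{lem:left ideal}. Put $B = \sqrt{A^*A}$, a positive operator in $\mathscr{R}$. By Lemma \ref{lem:left ideal} we have $\mathscr{R}A = \mathscr{R}B$, so the question reduces to deciding when $\mathscr{R}B$ is weak-operator closed. Moreover, since $t \mapsto \sqrt{t}$ is a homeomorphism of $[0,\infty)$ onto itself fixing $0$, the point $0$ is isolated in $\sigma(A^*A)$ precisely when it is isolated in $\sigma(B)$. Hence it suffices to prove, for a positive $B$, that $\mathscr{R}B$ is weak-operator closed if and only if $0$ is isolated in (or absent from) $\sigma(B)$.

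For the direction that an isolated $0$ forces $\mathscr{R}B$ to be weak-operator closed, I would first dispose of the trivial case $0 \notin \sigma(B)$, in which $B$ is invertible and $\mathscr{R}B = \mathscr{R}$. Otherwise there is $\delta > 0$ with $\sigma(B) \subseteq \{0\} \cup [\delta, \|B\|]$. The function $f$ given by $f(0) = 0$ and $f(t) = t^{-1}$ on $[\delta, \|B\|]$ is continuous on $\sigma(B)$, so $C := f(B)$ lies in $\mathscr{R}$ and satisfies $CB = BC = E$, where $E = \chi_{[\delta,\|B\|]}(B)$ is exactly the range projection of $B$. Since $B = BE \in \mathscr{R}E$ and $E = CB \in \mathscr{R}B$, I obtain $\mathscr{R}B = \mathscr{R}E$. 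This is a principal left ideal generated by a projection, and it is weak-operator closed because $\mathscr{R}E = \{R \in \mathscr{R} : R(I-E) = 0\}$ is the preimage of $\{0\}$ under the weak-operator continuous map $R \mapsto R(I-E)$.

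For the converse, suppose $\mathscr{R}B$ is weak-operator closed. Invoking the known description of weak-operator closed left ideals, I write $\mathscr{R}B = \mathscr{R}E$ for a projection $E \in \mathscr{R}$. Applying the criterion of Lemma \ref{lem:left ideal} to the two inclusions $\mathscr{R}B \subseteq \mathscr{R}E$ and $\mathscr{R}E \subseteq \mathscr{R}B$ yields scalars with $B^2 \le \lambda^2 E$ and $E \le \mu^2 B^2$. Comparing kernels in these inequalities gives $\ker E \subseteq \ker B$ and $\ker B \subseteq \ker E$, so $E$ is exactly the range projection of $B$. The inequality $E \le \mu^2 B^2$ then reads $B^2 \ge \mu^{-2} E$, which says that $B$ is bounded below by $\mu^{-1}$ on $E\mathscr{H}$, while $B$ vanishes on the complement $\ker B$; therefore $\sigma(B) \subseteq \{0\} \cup [\mu^{-1}, \|B\|]$ and $0$ is isolated in (or absent from) $\sigma(B)$.

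The functional-calculus construction of the generalized inverse and the weak-operator closedness of $\mathscr{R}E$ are routine. The step I expect to require the most care, and which I regard as the crux, is the converse: converting the two operator majorizations back into a genuine spectral gap. The key observations are that the two inequalities together pin $E$ down as the range projection of $B$, and that $E \le \mu^2 B^2$ is precisely a uniform lower bound for $B$ on $E\mathscr{H}$; once these are in hand the spectral conclusion is immediate. I would also take care to fold in the invertible case, consistent with reading ``$0$ is an isolated point of the spectrum'' as allowing $0 \notin \sigma(B)$.
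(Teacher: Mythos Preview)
Your proposal is correct and follows essentially the same route as the paper: reduce to a positive operator via Lemma~\ref{lem:left ideal}, use the description of weak-operator closed left ideals as $\mathscr{R}E$ for a projection $E$, and translate back and forth between the equality $\mathscr{R}B=\mathscr{R}E$ and a sandwich $\alpha E\le B^2\le \beta E$ to read off a spectral gap. The only cosmetic difference is that for the ``isolated $\Rightarrow$ closed'' direction the paper obtains $\mathscr{R}A=\mathscr{R}(I-F)$ directly from the inequalities $\mu^2(I-F)\le A^*A\le \lambda^2(I-F)$ and Lemma~\ref{lem:left ideal}, whereas you build a generalized inverse via the functional calculus; and for the converse you spell out that the two majorizations force $E$ to be the range projection of $B$, a step the paper leaves implicit.
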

\begin{proof}
If $A = 0$, the conclusion is straightforward. So we may assume that $A \ne 0$.

If $\mathscr{R}A$ is weak-operator closed, there is a unique projection $E$ in $\mathscr{R}$ such that $\mathscr{R}A = \mathscr{R}E$. From Lemma \ref{lem:left ideal}, there are $\mu, \lambda > 0$ such that $\mu^2 E \le A^*A \le \lambda^2 E$. This tells us that the spectrum of $A^*A$ is contained in $\{0\} \cup [\mu, \lambda]$ which implies that $0$ is an isolated point in the spectrum of $A^*A$.

For the converse, let $0$ be an isolated point in the spectrum of $A^*A$. By the spectral mapping theorem, $0$ is also an isolated point in the spectrum of $\sqrt{A^*A}$. Let the distance of $0$ from $\textrm{sp}(\sqrt{A^*A}) - \{0 \}$ (which is compact as $0$ is isolated) be $\mu > 0$ and $\lambda = \|A\|$. Let $F$ be the projection onto the kernel of $\sqrt{A^*A}$, which is the largest projection in $\mathscr{R}$ such that $\sqrt{A^*A}F = 0$. We have that $\mu^2 (I-F) \le A^*A \le \lambda^2 (I-F)$. Thus $\mathscr{R}A = \mathscr{R}(I-F)$ which is weak-operator closed.
\end{proof}

\begin{prop}
\label{prop:norm-weak}
\textsl{
Let $A$ be an operator in a von Neumann algebra $\mathscr{R}$ acting on the Hilbert space $\mathscr{H}$. Then the left ideal $\mathscr{R}A$ is norm-closed if and only if $\mathscr{R}A$ is weak-operator closed.
}
\end{prop}
\begin{proof}
Let $\mathscr{R}A$ be norm-closed. Without loss of generality, we may assume that $A$ is positive (since $\mathscr{R}A = \mathscr{R}
\sqrt{A^*A}$). By the Stone-Weierstrass theorem, for a continuous function $f$ on $\mathrm{sp}(A)$ vanishing at $0$, $f(A)$ is in $\mathscr{R}A$. In particular, $\sqrt{A}$ is in $\mathscr{R}A$. By Lemma \ref{lem:left ideal}, there is a $\lambda > 0$ such that $(\sqrt{A})^2 = A \le \lambda^2
A^2$. The operator $\lambda^2 A^2 - A$ is positive and by the spectral mapping theorem, $$\mathrm{sp}(\lambda ^2 A^2 - A) := \{ \lambda^2 \mu^2 - \mu : \mu \in \textrm{sp}(A) \}.$$ For a non-zero element $\mu$ in the spectrum of $A$, $\lambda^2 \mu^2 - \mu \ge 0 \Rightarrow \mu \ge \frac{1}{\lambda^2}$. This tells us that $0$ is an isolated point in the spectrum of $A$ and hence by Lemma \ref{lem:isolated}, $\mathscr{R}A$ is weak-operator closed.

The converse is straightforward as the weak-operator topology on $\mathscr{R}$ is coarser than the norm topology.
\end{proof}

Let $\mathscr{R}$ be a von Neumann algebra. We use the notation $\left\langle V \right\rangle$, to denote the linear span of a subset $V$ of $\mathscr{R}$. 

\begin{definition}
Let $\mathcal{S}$ be a family of operators in the von Neumann algebra $\mathscr{R}$. The smallest left ideal of $\mathscr{R}$ containing $\mathcal{S}$ is denoted by $\left\langle \mathscr{R}\mathcal{S} \right\rangle$ and said to be \textit{generated} by $\mathcal{S}$. A left ideal $\mathscr{I}$ is said to be {\it finitely generated} ({\it countably generated}, respectively) if $\mathscr{I} = \left\langle \mathscr{R}\mathcal{S} \right\rangle$ for a finite (countable, respectively) subset $\mathcal{S}$ of $\mathscr{R}$. Here we take a moment to stress that the set of generators is considered in a  purely algebraic sense.
\end{definition}

\begin{prop}
\label{prop:singly-gen}
\textsl{
Let $A_1, A_2$ be operators in a von Neumann algebra $\mathscr{R}$. Then $\mathscr{R}A_1 + \mathscr{R}A_2 = \mathscr{R} \sqrt{A_1^*A_1 + A_2^*A_2}$. Thus, every finitely generated left ideal of $\mathscr{R}$ is a principal ideal.
}
\end{prop}
\begin{proof}
Consider the operators $A, \widetilde{A}$ in $M_2(\mathscr{R})$ represented by,
\[
A=
  \begin{bmatrix}
    A_1 & 0 \\
    A_2 & 0
  \end{bmatrix},
  \widetilde{A} = 
   \begin{bmatrix}
    \sqrt{A_1^*A_1 + A_2^*A_2} & 0 \\
    0 & 0
  \end{bmatrix}
\]
It is easy to see that $A^*A = \widetilde{A}^* \widetilde{A}$. By Lemma \ref{lem:left ideal}, $M_2(\mathscr{R})A = M_2(\mathscr{R})\widetilde{A}$ and comparing the $(1,1)$ entry on both sides, our result follows. Inductively, for operators $A_1, \dotsc, A_n$ in $\mathscr{R}$, we see that $$\mathscr{R}A_1 + \dotsb + \mathscr{R}A_n = \mathscr{R} \sqrt{A_1^*A_1 + \dotsb + A_n^*A_n}.$$ In conclusion, every finitely generated left ideal of $\mathscr{R}$ is singly generated.
\end{proof}

\begin{cor}
\label{cor:norm-weak}
\textsl{
If $\mathscr{I}$ is a norm-closed left ideal of $\mathscr{R}$ which is finitely generated, then $\mathscr{I}$ is weak-operator closed.
}
\end{cor}
\begin{proof}
A straightforward consequence of Proposition \ref{prop:norm-weak}, \ref{prop:singly-gen}.
\end{proof}

\begin{thm}
\label{thm:count-gen}
\textsl{
If $\mathscr{I}$ is a norm-closed left ideal of $\mathscr{R}$ which is countably generated, then $\mathscr{I}$ is weak-operator closed (and thus, a left principal ideal).
}
\end{thm}
\begin{proof}
Let $\mathscr{I}$ be a countably generated norm-closed left ideal of $\mathscr{R}$ with generating set $\mathcal{S} := \{ A_i : i \in \mathbb{N} \}$. We prove that it must be weak-operator closed. Noting that $\mathscr{R}A_i = \mathscr{R}\sqrt{A_i ^* A_i}$ and after appropriate scaling, we may assume that the $A_i$'s are positive contractions (i.e. $A_i$'s are positive and $\|A_i \| \le 1$). For $n \in \mathbb{N}$, define $B_n := \sqrt{\sum_{i=1}^n \frac{A_i ^2}{2^n}}$. Thus the sequence $\{ B_i^2 \}_{i=1}^{\infty}$ is an increasing Cauchy sequence of positive operators in $\mathscr{I}$, and $\lim_{n \rightarrow \infty} B_n^2$ exists. As $\mathscr{I}$ is norm-closed, the positive operator $B := \sqrt{\lim_{n \rightarrow \infty} B_n^2}$ is in $\mathscr{I}$ and thus $\mathscr{R}B \subseteq \mathscr{I}$. Also for each $n \in \mathbb{N}$ as $A_n ^2 \le 2^n B_n^2 \le 2^n B^2$, by Lemma \ref{lem:left ideal}, we have that $\mathscr{R}A_n \subseteq \mathscr{R}B$. Thus $\mathscr{I} \subseteq \mathscr{R}B$ and combined with the previous conclusion, $\mathscr{I} = \mathscr{R}B$. By Corollary \ref{cor:norm-weak}, being norm-closed, $\mathscr{I} = \mathscr{R}B$ is also weak-operator closed.
\end{proof}

Below we note a result about norm-closed left ideals of represented C*-algebras. In the results that follow after, we will see how a similar conclusion holds for left ideals in von Neumann algebras.

\begin{prop}
\label{prop: ideal}
\textsl{
Let $\mathfrak{A}$ be a C*-algebra acting on the Hilbert space $\mathscr{H}$ and let $\mathscr{I}$ be a norm-closed left ideal of $\mathfrak{A}$. Then there is a norm-closed left ideal $\mathscr{J}$ of $\mathcal{B}(\mathscr{H})$ such that $\mathscr{I} = \mathscr{J} \cap \mathfrak{A}$.
}
\end{prop}

\begin{proof}
For a state $\rho$ on a $C^*$-algebra, we denote its left kernel, as defined in \cite[\S 4.5.2]{kadison-ringrose1}, by $\mathcal{L}_{\rho}$. Let $\mathscr{P}^I$ denote the set of pure states on $\mathfrak{A}$ whose left kernels contain
$\mathscr{I}$. Then from Theorem 3.2 in \cite{kadison-irreducible}, we have that $$\mathscr{I} = \bigcap_{\rho \in \mathscr{P}^I} \mathcal{L}_{\rho}$$
A pure state $\rho$ on $\mathfrak{A}$ can be extended to a pure state $\overline{\rho}$ on $\mathcal{B}(\mathscr{H})$. We denote the set of all such extensions of the states in $\mathscr{P}^I$ by $\overline{\mathscr{P}}^I$. Being an intersection of norm-closed left ideals, the set $$\mathscr{J} := \bigcap_{\overline{\rho} \in \overline{\mathscr{P}}^I} \mathcal{L}_{\overline{\rho}}$$ is also a norm-closed left ideal of $\mathcal{B}(\mathscr{H})$. Clearly if $\overline{\rho}$ in $\overline{\mathscr{P}}^I$ is an extension of a state $\rho$ in $\mathscr{P}^I$, we have that  $\mathcal{L}_{\overline{\rho}} \cap \mathfrak{A} = \mathcal{L}_{\rho}$. Thus we conclude that $\mathscr{I} = \mathscr{J} \cap \mathfrak{A}$.
\end{proof}

\begin{prop}
\textsl{
Let $\mathscr{R}_1, \mathscr{R}_2$ be von Neumann algebras acting on the Hilbert space $\mathscr{H}$. Let $A$ be an operator in $\mathscr{R}_1 \cap \mathscr{R}_2$. Then $\mathscr{R}_1 A \cap \mathscr{R}_2 = \mathscr{R}_1 A \cap \mathscr{R}_2 A = (\mathscr{R}_1 \cap \mathscr{R}_2)A$.
}
\end{prop}
\begin{proof}
Let $B$ be an operator in $\mathscr{R}_1 A \cap \mathscr{R}_2$. As $B \in \mathscr{R}_1 A$, we have that $B^*B \le \lambda ^2 A^*A$ for some $\lambda \ge 0$. As $B, A$ are both in $\mathscr{R}_2$, we conclude from Lemma \ref{lem:left ideal} that $B$ is also in $\mathscr{R}_2A$. Thus $B \in \mathscr{R}_1 A \cap \mathscr{R}_2 A$. This proves that $\mathscr{R}_1 A \cap \mathscr{R}_2 \subseteq \mathscr{R}_1 A \cap \mathscr{R}_2 A$. The reverse inclusion is obvious. Thus $\mathscr{R}_1 A \cap \mathscr{R}_2 = \mathscr{R}_1 A \cap \mathscr{R}_2 A$.

By considering the von Neumann algebra $\mathscr{R}_1 \cap \mathscr{R}_2$ (in place of $\mathscr{R}_2$), we have from the above that $\mathscr{R}_1 A \cap \mathscr{R}_2 = \mathscr{R}_1 A \cap (\mathscr{R}_1 \cap \mathscr{R}_2) = \mathscr{R}_1 A \cap (\mathscr{R}_1 \cap \mathscr{R}_2)A = (\mathscr{R}_1 \cap \mathscr{R}_2)A$.
\end{proof}

\begin{cor}
\label{cor:intersection}
\textsl{
Let $\mathscr{R}_1, \mathscr{R}_2$ be von Neumann algebras acting on the Hilbert space $\mathscr{H}$. Let $\mathcal{S}$ be a family of operators in $\mathscr{R}_1 \cap \mathscr{R}_2$. Then $\left\langle \mathscr{R}_1\mathcal{S} \right\rangle \cap \mathscr{R}_2 = \left\langle (\mathscr{R}_1 \cap \mathscr{R}_2)\mathcal{S} \right\rangle$.
}
\end{cor}
\begin{proof}
Let $A, B$ be operators in $\mathcal{S}$. From Proposition \ref{prop:singly-gen}, \ref{cor:intersection}, we have that $\left\langle \mathscr{R}_1 \{A, B\} \right\rangle \cap \mathscr{R}_2 = (\mathscr{R}_1A + \mathscr{R}_1B) \cap \mathscr{R}_2 = \mathscr{R}_1 \sqrt{A^*A + B^*B} \cap \mathscr{R}_2 = (\mathscr{R}_1 \cap \mathscr{R}_2)\sqrt{A^*A + B^*B} = (\mathscr{R}_1 \cap \mathscr{R}_2) A + (\mathscr{R}_1 \cap \mathscr{R}_2) B = \left\langle (\mathscr{R}_1 \cap \mathscr{R}_2) \{A, B\} \right\rangle$. Thus $\left\langle \mathscr{R}_1\mathcal{S} \right\rangle \cap \mathscr{R}_2 = \left\langle (\mathscr{R}_1 \cap \mathscr{R}_2)\mathcal{S} \right\rangle$.
\end{proof}

The corollary below is in the same vein as Proposition \ref{prop: ideal}. In effect, it says that every left ideal of a represented von Neumann algebra may be viewed as the intersection of a left ideal of the full algebra of bounded operators on the underlying Hilbert space with the von Neumann algebra.

\begin{cor}
\textsl{
Let $\mathscr{R}$ be a von Neumann algebra acting on the Hilbert space $\mathscr{H}$. Let $\mathscr{I}$ be a left ideal of $\mathscr{R}$. Then there is a left ideal $\mathscr{J}$ of $\mathcal{B}(\mathscr{H})$ such that $\mathscr{I} = \mathscr{J} \cap \mathscr{R}$.
}
\end{cor}
\begin{proof}
By choosing $\mathscr{R}_1 = \mathcal{B}(\mathscr{H}), \mathscr{R}_2 = \mathscr{R}$ and $\mathcal{S} = \mathscr{I}$ and using Corollary \ref{cor:intersection}, we see that for $\mathscr{J} := \left\langle \mathcal{B}(\mathscr{H})\mathscr{I} \right\rangle$, we have that, $\mathscr{I} = \left\langle \mathscr{R} \mathscr{I} \right\rangle = \left\langle (\mathcal{B}(\mathscr{H}) \cap \mathscr{R}) \mathscr{I} \right\rangle = \mathscr{J} \cap \mathscr{R}$ and $\mathscr{J}$ is a left ideal of $\mathcal{B}(\mathscr{H})$.
\end{proof}

\subsection{Results on $C^*$-$2$-convex sets}

In this subsection, we prove that $C^*$-$2$-convex subsets of bimodules over a von Neumann algebra are $C^*$-convex.

\begin{lemma}
\label{lem:prep-cstar}
\textsl{
Let $\mathscr{R}$ be a von Neumann algebra acting on the Hilbert space $\mathscr{H}$, with identity $I.$ If $T_1, \cdots, T_n$ are operators in $\mathscr{R}$ such that $T_1^* T_1 + \cdots + T_n^* T_n = I$, then there are operators $S_1, \cdots, S_{n-1}$ in $\mathscr{R}$ such that $T_i = S_i \sqrt{I-T_n^* T_n}, 1 \le i \le n-1$ and $S_1^* S_1 + \cdots S_{n-1}^* S_{n-1}$ is the range projection of $\sqrt{I - T_n^*T_n}$.
}
\end{lemma}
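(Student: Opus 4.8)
The plan is to extract the operators $S_i$ directly from the constructive proof of the Douglas lemma and then identify the sum $\sum_{i=1}^{n-1} S_i^* S_i$ by a quadratic-form argument. First I would set $B := \sqrt{I - T_n^* T_n}$, a positive operator in $\mathscr{R}$, and record the identity $\sum_{i=1}^{n-1} T_i^* T_i = I - T_n^* T_n = B^2 = B^* B$. In particular, for each $i$ with $1 \le i \le n-1$ we have $T_i^* T_i \le B^* B$, so Theorem \ref{theorem:douglas} furnishes an operator $S_i \in \mathscr{R}$ with $T_i = S_i B$. Crucially, I would take $S_i$ to be the specific operator produced in the proof of Theorem \ref{theorem:douglas}, namely the one determined by $S_i(Bf) = T_i f$ on $\mathrm{ran}(B)$ and $S_i h = 0$ for $h \in \mathrm{ran}(B)^{\perp}$. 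This canonical choice (the factorization is otherwise non-unique, since one may add to $S_i$ any operator killing $\mathrm{ran}(B)$) is exactly what makes the range-projection computation go through.

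Next I would compute $P := \sum_{i=1}^{n-1} S_i^* S_i$. Using $T_i = S_i B$ together with $B = B^*$, summing the identities $T_i^* T_i = B S_i^* S_i B$ gives $BPB = \sum_{i=1}^{n-1} T_i^* T_i = B^2$. Let $E$ denote the range projection of $B$ (equivalently, of $\sqrt{I - T_n^* T_n}$), so that $EB = BE = B$ and $E$ is the projection onto $\mathrm{ran}(B)^{-}$. Since each $S_i$ annihilates $\mathrm{ran}(B)^{\perp} = (I-E)\mathscr{H}$ by construction, we have $S_i(I-E) = 0$, hence $S_i = S_i E$ and $S_i^* S_i = E S_i^* S_i E$; summing yields $P = EPE$.

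It remains to show $P = E$, and this is the step I expect to be the crux. From $BPB = B^2 = BEB$ (the last equality because $EB = BE = B$) I would obtain $B(P-E)B = 0$. Writing $Q := P - E$, which satisfies $Q = EQE$ since both $P$ and $E$ are supported under $E$, the relation $BQB = 0$ unfolds to $\langle Q\,Bf, Bh\rangle = 0$ for all $f, h \in \mathscr{H}$. Because vectors of the form $Bf$ are dense in $E\mathscr{H} = \mathrm{ran}(B)^{-}$ and $Q$ is bounded, continuity forces $\langle Qu, v\rangle = 0$ for all $u, v \in E\mathscr{H}$, i.e. $EQE = 0$; but $Q = EQE$, so $Q = 0$ and $P = E$, exactly the range projection of $\sqrt{I - T_n^* T_n}$. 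The only delicate ingredients are the reliance on the explicit range-annihilating choice of each $S_i$ and the injectivity of $B$ on the closure of its range, both of which are already embedded in the proof of Theorem \ref{theorem:douglas}.
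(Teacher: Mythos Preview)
Your proof is correct and follows essentially the same route as the paper: you invoke the constructive Douglas factorization to obtain the specific $S_i$ that vanish on $\mathrm{ran}(B)^{\perp}$, derive $BPB = B^2$, and then use a density/quadratic-form argument to identify $P$ with the range projection $E$. Your final step is slightly more explicit than the paper's (you work with the full sesquilinear form and the compression $Q = EQE$, whereas the paper only records $\langle Pf,f\rangle = \langle f,f\rangle$ on $\mathrm{ran}(B)$ and $\langle Pg,g\rangle = 0$ on $\ker(B)$ and declares the conclusion), but the underlying idea is identical.
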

\begin{proof}
As $T_1^* T_1 + \cdots + T_{n-1}^* T_{n-1} = I - T_n^* T_n$, we have that $$T_i^* T_i \le \sqrt{I - T_n^* T_n}\sqrt{I - T_n^* T_n}, 1 \le i \le n-1.$$ Since $\sqrt{I - T_n^* T_n}$ is self-adjoint, the orthogonal complement of the range of $\sqrt{I - T_n^* T_n}$ is equal to the kernel of $\sqrt{I - T_n^* T_n}$. From the proof of the Douglas lemma for von Neumann algebras in Theorem \ref{theorem:douglas}, we have for each $i \in \{1, 2, \cdots, n-1 \}$, an operator $S_i$ in $\mathscr{R}$ such that $T_i = S_i \sqrt{I - T_n^* T_n}$, and  $S_i z = 0$ for any vector $z$ in $\mathrm{ker}(\sqrt{I - T_n^* T_n})$. Note that, $$I - T_n^* T_n = T_1^* T_1 + \cdots + T_{n-1}^* T_{n-1} = \sqrt{I - T_n^* T_n}(S_1^* S_1 + \cdots + S_{n-1}^* S_{n-1})\sqrt{I - T_n^* T_n}.$$
As a result, for every vector $x$ in the range of $\sqrt{I - T_n^* T_n}$, we have $\langle x, x \rangle = \langle (S_1^* S_1 + \cdots + S_{n-1}^* S_{n-1})x, x \rangle.$ In addition, for every vector $g$ in the kernel of $\sqrt{I - T_n^* T_n}$, we have $\langle (S_1^* S_1 + \cdots + S_{n-1}^* S_{n-1})y, y \rangle = 0$. Thus  the operator $S_1^* S_1 + \cdots + S_{n-1}^* S_{n-1}$ must be the range projection of $\sqrt{I - T_n^* T_n}$.
\end{proof}

\begin{prop}
{\sl
\label{prop:segment}
For a {\it finite} von Neumann algebra $\mathscr{R}$ and a Hilbert $\mathscr{R}$-bimodule $\mathfrak{H}$, a subset $\mathscr{S}$ of $\mathfrak{H}$ is $\mathscr{R}$-convex if and only if  $\mathscr{S}$ is $\mathscr{R}$-$2$-convex.}
\end{prop}

\begin{proof}
If $\mathscr{S}$ is $\mathscr{R}$-convex, the $\mathscr{R}$-segment $S(A_1, A_2)$ is clearly in $\mathscr{S}$ for any $A_1, A_2 \in \mathscr{S}$ as it consists of $\mathscr{R}$-convex combinations of $A_1$ and $A_2$. For the other direction, we inductively prove that for $(A_1, \cdots, A_n)$, an $n$-tuple of vectors from $\mathscr{S}$ and $T_1, \cdots,T_n \in \mathscr{R}$ satisfying $T_1^* T_1 + \cdots T_n^* T_n = I$, the $\mathscr{R}$-convex combination $T_1^*A_1T_1 + \cdots T_n^* A_n T_n$ is in $\mathscr{S}$. For $n = 1, 2$, the above is clearly true from the $\mathscr{R}$-$2$-convexity of $\mathscr{S}$.

As $\mathscr{R}$ is finite, by \cite[Exercise 6.9.10(ii)]{kadison-ringrose3}, each of the $T_i$'s has a unitary polar decomposition, that is, there are unitary operators $U_i$ and positive operators $P_i$ such that $T_i = U_i P_i$, for $1 \le i \le n$. The vectors $A_i' := U_i^*A_iU_i$ are in $\mathscr{S}$ and $P_1^2 + \cdots + P_n^2 = I$. From Lemma \ref{lem:prep-cstar}, there are operators $S_1, \cdots, S_{n-1}$ in $\mathscr{R}$ as defined in Lemma \ref{lem:prep-cstar} such that $P_i = S_i \sqrt{I - P_n^2}, 1 \le i \le n-1$ and $S_1^*S_1 + \cdots + S_{n-1}^*S_{n-1} = E$  where $E$ is the range projection of $\sqrt{I - P_n^2}$. Let $F$ denote the projection onto the kernel of $\sqrt{I - P_n^2}$. As $F = I-E$, clearly $F$ is in $\mathscr{R}$. For $i \in \{ 1, \cdots, n-1 \}$ as $ \mathrm{ker}(\sqrt{I-P_n^2}) \subseteq \mathrm{ker}(P_i)$, we have that $P_i F = F P_i =0$ and as a result $FS_i = 0$. Now define $S_i ' := S_i + \frac{F}{\sqrt{n-1}}, 1 \le i \le n-1$. We see that $S_1 '^*S_1' + \cdots + S_{n-1}'^*S_{n-1}' = (S_1^*S_1 + \frac{F}{n-1}) + \cdots + (S_{n-1}^*S_{n-1} + \frac{F}{n-1}) = E + F = I$ and $P_i = S_i'\sqrt{I - P_n^2}$. By the induction hypothesis, $A' := S_1'^* A_1'S_1 + \cdots S_{n-1}'^* A_{n-1}'S_{n-1}'$ is in $\mathscr{S}$. As $T_1^*A_1T_1 + \cdots T_n^* A_n T_n = \sqrt{I - P_n^2}A'\sqrt{I - P_n^2} + P_n A_n' P_n$,  being a $\mathscr{R}$-convex combination of $A'$ and $A_n'$, it must be in $\mathscr{S}$. 
\end{proof}

At this point, we refer the reader to Chapter 6 of \cite{kadison-ringrose2} for a detailed account of the comparison theory of projections in von Neumann algebras. We denote the Murray-von Neumann equivalence relation for projections in a von Neumann algebra by $\sim$ and the partial order it begets by $\precsim$. Below we mention (without proof) the halving lemma (see \cite[Lemma 6.3.3]{kadison-ringrose2}) for properly infinite projections in a von Neumann algebra as it will be repeatedly used in Proposition \ref{prop:segment2}.

\begin{lemma}[Halving Lemma]
\label{lem:halving}
{\sl
Let $E$ be a properly infinite projection in a von Neumann algebra $\mathscr{R}$. There is a projection $F$ in $\mathscr{R}$ such that $F \le E$ and $F \sim (E - F) \sim E$.}
\end{lemma}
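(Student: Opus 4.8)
The plan is to reduce the lemma to a single structural input --- the existence of two mutually orthogonal subprojections of $E$, each equivalent to $E$ --- and then to absorb the remaining part of $E$ into one of them by an application of the Cantor--Bernstein theorem for projections. Concretely, suppose one has produced orthogonal projections $P, Q \le E$ with $P \sim Q \sim E$. Setting $F := P$, the relation $F \sim E$ is immediate. For the complement, I would observe that $Q \le E - P = E - F$, so that $E \sim Q \precsim E - F$, while $E - F \le E$ gives $E - F \precsim E$. By the Cantor--Bernstein theorem for projections in a von Neumann algebra (part of the comparison theory in Chapter 6 of \cite{kadison-ringrose2}), the two subordinations $E \precsim E - F$ and $E - F \precsim E$ force $E - F \sim E$. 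Combined with $F \sim E$, this yields $F \sim (E - F) \sim E$, exactly as required. This portion of the argument is short and uses nothing beyond Murray--von Neumann comparison.

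The substance therefore lies in producing the pair $P, Q$, and this is precisely where proper infiniteness --- rather than mere infiniteness --- is indispensable. Iterating a single proper subequivalence $E \sim E_1 < E$ (via a partial isometry $W$ with $W^*W = E$, $WW^* = E_1$) only manufactures orthogonal copies of the small gap $E - E_1$, not of $E$ itself, so one cannot split $E$ into two full copies from infiniteness alone. Instead I would take, by Zorn's lemma, a maximal family of equivalent orthogonal pairs: a maximal collection $\{(G_i, G_i')\}$ of mutually orthogonal projections with $G_i \sim G_i'$ and $\sum_i (G_i + G_i') \le E$ (unions of chains are admissible since orthogonal sums converge in the strong-operator topology). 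Writing $P := \sum_i G_i$, $Q := \sum_i G_i'$ and $L := E - P - Q$, maximality forces $L$ to contain no two orthogonal nonzero equivalent subprojections, so $L$ is finite. It remains to upgrade $P \sim Q$ to $P \sim Q \sim E$, which I would do by invoking the generalized comparison theorem on each nonzero central summand of the central carrier $C_E$: proper infiniteness guarantees that on every such summand the infinite part resides in $P + Q$, allowing the finite remainder $L$ to be reattached to one block and absorbed without disturbing the equivalence.

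I expect this last step --- the central bookkeeping establishing $P \sim Q \sim E$ --- to be the main obstacle. One must verify, central summand by central summand, that proper infiniteness genuinely forces a pair of orthogonal \emph{full} copies of $E$ (equivalently, that the maximal family is nontrivial on all of $C_E$) and that the finite leftover $L$ merges cleanly; it is here that the comparison theorem over the center, and not any elementary manipulation of a fixed partial isometry, carries the argument. Once the pair $P, Q$ is in hand, the Cantor--Bernstein absorption described above completes the proof with no further difficulty.
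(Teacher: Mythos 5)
First, a point of reference: the paper does not prove this lemma at all --- it is stated explicitly ``without proof'' and cited from \cite[Lemma 6.3.3]{kadison-ringrose2} --- so your attempt must be judged against the standard textbook argument rather than anything in the text. Your opening reduction is correct and clean: given orthogonal $P, Q \le E$ with $P \sim Q \sim E$, the Schr\"oder--Bernstein theorem for projections does yield $E - F \sim E$ for $F := P$, and your observation that iterating a single partial isometry witnessing infiniteness only manufactures copies of the gap $E - E_1$, not of $E$, correctly identifies why proper infiniteness must enter elsewhere. The maximal family of equivalent orthogonal pairs is also the standard opening move, and the inference that the leftover $L$ is finite is sound (it contains no two orthogonal nonzero equivalent subprojections, hence is abelian, hence finite), though it deserves a line of justification.

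The genuine gap is exactly where you predict it, and the tools you name do not close it. From $E = P + Q + L$ with $P \sim Q$ and $L$ finite, you still owe $P \sim E$, and ``the infinite part resides in $P + Q$'' cannot deliver this: two properly infinite projections with the same central carrier need not be equivalent (in $\mathcal{B}(\mathscr{H})$ with $\dim\mathscr{H} = \aleph_1$, projections of rank $\aleph_0$ and rank $\aleph_1$ are both properly infinite with central carrier $I$ yet inequivalent), so establishing that $P$ is infinite on every central summand is strictly weaker than what is needed. Moreover, the intermediate steps you invoke --- that $E$ infinite and $L$ finite force $P+Q$ infinite, that $P \sim Q$ with $P+Q$ infinite forces $P$ infinite, and that the finite remainder $L$ can be ``absorbed'' into $P$ or $Q$ without disturbing equivalence --- all rest on the additivity of finiteness (\cite[Theorem 6.3.8]{kadison-ringrose2}) or on absorption lemmas whose standard proofs themselves use the halving lemma, so the sketch as written risks circularity. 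Closing the argument requires extracting $P \sim E$ directly from the maximality of the family via the comparison theorem (comparing $L$ against $P$ over the center and ruling out the summand on which $P \precsim L$, where $E$ would decompose into finitely many abelian pieces), followed by your Schr\"oder--Bernstein step; that central bookkeeping is the actual content of the lemma and is the part your proposal leaves unproved.
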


\begin{prop}
\label{prop:segment2}
{\sl For an infinite von Neumann algebra $\mathscr{R}$ and a Hilbert $\mathscr{R}$-bimodule $\mathfrak{H}$, a subset $\mathscr{S}$ of $\mathfrak{H}$ is $\mathscr{R}$-convex if and only if  $\mathscr{S}$ is $\mathscr{R}$-$2$-convex.}
\end{prop}
\begin{proof}
It is straightforward from the definitions that every $\mathscr{R}$-convex subset is $\mathscr{R}$-$2$-convex. We prove the other direction inductively. For $n \in \mathbb{N}$, let the $\mathscr{R}$-polytope generated by any $(n-1)$-tuple of elements from $\mathscr{S}$ be contained in $\mathscr{S}$. Below we prove that the $\mathscr{R}$-polytope generated by an $n$-tuple $(A_1, A_2, \cdots, A_n)$ of elements from $\mathscr{S}$ is contained in $\mathscr{S}$. In other words, for operators  $T_1, \cdots, T_n$ in $\mathscr{R}$ such that $\sum_{i=1}^n T_i ^* T_i = I$, we prove that $\sum_{i=1}^n T_i ^* A_i T_i$ is in $\mathscr{S}$. Note that for $n = 1, 2$, the above is clearly true from the hypothesis of $\mathscr{R}$-$2$-convexity of $\mathscr{S}$.

Repeatedly using Lemma \ref{lem:halving}, consider mutually orthogonal projections $E_1, E_2, \ldots, E_n$ and projections $E_{n1}, E_{n2}, \ldots, E_{nn}$ in $\mathscr{R}$ such that 
\begin{align*}
&E_1 + E_2 + \cdots + E_n = I,\\
&E_n = E_{n1} + E_{n2} + \cdots + E_{nn},\\
&E_1 \sim  E_2 \sim \cdots \sim  E_n \sim E_{n1} \sim E_{n2} \sim \cdots \sim  E_{nn} \sim I.
\end{align*}
For $i \in \{ 1, 2, \ldots, n-1 \}$, define $F_i:= E_i + E_{ni}$. s $I \sim E_i \le F_i \le I$, from the reflexivity of $\precsim$ we have that $E_i \sim F_i \sim I$. For $i \in \{1, 2, \ldots , n-1 \}$, let $W_i$ be a partial isometry with initial projection $F_i$ and final projection $I$, and $W_i '$ be a partial isometry with initial projection $E_i$ and final projection $F_i$. Define $W := \sum_{i=1}^{n-1} W_i '$. Note that $W$ itself is a partial isometry with initial projection $\sum_{i=1}^{n - 1} E_i = I - E_n$ and final projection $\sum_{i=1}^{n-1} F_i = I$. Further let $W_n$ be a partial isometry with initial projection $E_n$ and final projection $I$. For $i \in \{ 1, \ldots, n-1 \}$, let $V_i := W_i W$ and define $V_n := W_n$. To assist the reader in navigating the maze of partial isometries we have defined, we tabulate the partial isometries and their initial and final projections in Table \ref{tab:partial}. Recall that for a partial isometry $V$ with initial projection $E$ and final projection $F$, we have that $V^*V = E$ and $VV^* = F$.
\begin{table}[ht]
\centering
\begin{tabular}{|c|c|c|}
\hline
Partial Isometry & Initial Projection & Final Projection \\
\hline
$W_i$ & $F_i$ & $I$\\
$W_i ' $ & $E_i$ & $F_i$ \\
$W ( = \sum_{j=1}^{n-1} W_j ')$ & $I-E_n$ & $I$\\
$W_n$ & $E_n$ & $I$\\
$V_i$ & $E_i$ & $I$\\
\hline
\end{tabular}
\caption{Reference table for the partial isometries. The index $i$ ranges from $1$ to $n-1$.}
\label{tab:partial}
\end{table}

By the induction hypothesis, the vector $A := W_1 ^* A_1 W_1 + \cdots + W_{n-1} ^* A_{n-1} W_{n-1}$ is in 
$\mathscr{S}$ as $\sum_{i=1}^{n-1} W_i ^* W_i = \sum_{i=1}^{n-1} F_i = I$. Note that $W^*AW = \sum_{i=1}^{n-1} V_i ^* A_i V_i$  and the vector $W^*AW + W_n^* A_n W_n (= \sum_{i=1}^{n} V_i ^* A_i V_i)$ is in the $\mathscr{R}$-segment joining $A$ and $A_n$ as $W^*W + W_n ^* W_n = (I-E_n) + E_n = I$, and thus $\sum_{i=1}^{n} V_i ^* A_i V_i$ is in $\mathscr{S}$. We further have that $\sum_{i=1}^n V_i ^* V_i = (\sum_{i=1}^{n-1} W^*W_i^*W_i W ) +W_n ^*
W_n = (\sum_{i=1}^{n-1} W^*F_i W) + E_n =  (\sum_{i=1}^{n-1} E_i) + E_n  = I$. Consider the operator $\widetilde{V} := V_1^*T_1 + \cdots + V_n ^* T_n$ in $\mathscr{R}$ and the vector $\widetilde{A} := V_1 ^*A_1 V_1 + \cdots + V_n ^* A_n V_n$ in $\mathfrak{H}$. As $V_i V_j^* = \delta _{ij} I$ for $1 \le i, j \le n$, note that $\widetilde{V}^*\widetilde{V} = \sum_{i=1}^n \sum_{j=1}^n T_i ^* V_i V_j ^* T_j = \sum_{i=1}^n T_i ^* V_i V_i ^* T_i = \sum_{i=1}^n T_i ^* T_i = I$ and $\widetilde{V}^*\widetilde{A}\widetilde{V} = \sum_{i=1}^n \sum_{j=1}^n \sum_{k=1}^n T_i ^* V_i V_j ^* A_j V_j V_k ^* T_k = \sum_{i=1}^n T_i ^* A_i T_i$. We have already proved that $\widetilde{A}$ is in $\mathscr{S}$. Thus we have that $\widetilde{V}^*\widetilde{A}\widetilde{V} = \sum_{i=1}^n T_i ^* A_i T_i$ is in $\mathscr{S}$. This finishes the proof.
\end{proof}

From the type decomposition of von Neumann algebras (see \cite[Theorem 6.5.2.]{kadison-ringrose2}), for a von Neumann algebra $\mathscr{R}$, we have central projections $E, F$ in $\mathscr{R}$ such that $E + F = I$, $\mathscr{R}E$ is a finite von Neumann algebra acting on $E(\mathscr{H})$, and $\mathscr{R}F$ is a properly infinite von Neumann algebra acting on $F(\mathscr{H})$. Thus combining Proposition \ref{prop:segment} and Proposition \ref{prop:segment2}, we have the following theorem.

\begin{thm}
\label{thm:segment}
{\sl For a von Neumann algebra $\mathscr{R}$ and an $\mathscr{R}$-bimodule $\mathfrak{H}$, a subset $\mathscr{S}$ of $\mathfrak{H}$ is $\mathscr{R}$-convex if and only if $\mathscr{S}$ is $\mathscr{R}$-$2$-convex.}
\end{thm}

\bibliographystyle{plain}
\bibliography{reference}

\end{document}